\newtheorem{thm}{Theorem}[section]
\newtheorem{lem}[thm]{Lemma}
\theoremstyle{definition}
\theoremstyle{remark}
\numberwithin{equation}{section}
\def\RR{{\mathbb R}}
\def\CC{{\mathbb C}}
\newcommand{\thmref}[1]{Theorem~\ref{#1}}
\newcommand{\lemref}[1]{Lemma~\ref{#1}}
\newcommand{\eqnref}[1]{Equation~(\ref{#1})}
\newcommand{\secref}[1]{\S\ref{#1}}
\begin{document}

\Large\title[The determinant of toeplitz matrices]{A fast elementary algorithm for computing the determinant of toeplitz matrices}

\author{Zubeyir Cinkir}
\address{Zubeyir Cinkir\\
Department of Mathematics\\
Zirve University\\
27260, Gaziantep, TURKEY\\}
\email{zubeyir.cinkir@zirve.edu.tr}

\keywords{Toeplitz matrix, determinant, fast algorithm, logarithmic time}

\begin{abstract}
In recent years, a number of fast algorithms for computing the determinant of a Toeplitz matrix were developed.
The fastest algorithm we know so far is of order $k^2\log{n}+k^3$, where $n$ is the number of rows of the Toeplitz matrix and $k$ is the bandwidth size.
This is possible because such a determinant can be expressed as the determinant of certain parts of $n$-th power of a related $k \times k$ companion matrix. In this paper, we give a new elementary proof of this fact, and provide various examples. We give symbolic formulas for the determinants of Toeplitz matrices in terms of the eigenvalues of the corresponding companion matrices when $k$ is small.
\end{abstract}

\maketitle

\section{Introduction} \label{sec intr}
\vskip .1 in


In this paper, we consider an $n \times n$ Toeplitz band matrix $T_{n}$ with $r$ and $s$ superdiagonals as shown below:
\begin{eqnarray*}\label{eqn mat Tn}
T_{n}=
\left(
\begin{array}{ccccccccc}
 a_0     & a_1    & \cdots & a_s &     &        & \mathbf{0} \\
 a_{s+1} & a_0    & \cdots &     & a_s &        &           \\
 \vdots  & \ddots & \ddots &     &     & \ddots &           \\
a_{s+r}  &        &        &     &     &        & a_s       \\
         & a_{s+r}&        &     &     &        & \vdots     \\
         &        & \ddots &     &     &        &   a_1  \\
\mathbf{0}&        &        & a_{s+r}& \cdots &  a_{s+1}  & a_0 \\
\end{array}
\right)_{n \times n},
\end{eqnarray*}
where $r+s=k$, $a_s \neq 0$, $a_{s+r} \neq 0$, $a_i \in K$ for $i=0, \dots, k$ and $K$ is a field. Without loss of generality we assume that $s \leq k$, as the determinant remains the same under taking the transpose, and our main concern is the determinant of $T_{n}$.

Finding fast algorithms to compute $\det(T_{n})$ is of interest for various applications. A number of fast algorithms computing $\det(T_{n})$ are developed recently (for tri-diagonal and pentadiagonal cases, see \cite{ZC}, \cite{KM}, \cite{M}, \cite{LHL}, \cite{S} ). Whenever $r=s=2$, the author \cite{ZC} gave an elementary algorithm computing $\det(T_{n})$ in $82 \sqrt{n}+O(\log{n})$ operations. In this paper, we both improved and generalized the algorithm given in \cite{ZC}. Namely, the algorithm we give here works for any $r \geq 1$ and $s \geq 1$, i.e., for any $k \geq 2$. Moreover, it takes
$O(\frac{3}{2}k^2 \log_2 \frac{n}{k}+s^3)$ operations to compute $\det(T_{n})$. The key part in this improvement is that the computation of $\det(T_{n})$ can be related to the powers of the following $k \times k$ companion matrix $C$ associated to $T_{n}$:
\begin{eqnarray}\label{eqn mat C}
C=\left(
\begin{array}{ccccc}
 \frac{-a_{s-1}}{a_s} & & &  \\
 \vdots & & & \\
 \frac{-a_1}{a_s} &  & & & \\
 \frac{-a_0}{a_s} & & I_{(k-1) \times (k-1)} &  \\
 \frac{-a_{s+1}}{a_s} &  &  & \\
 \vdots & & & \\
 \frac{-a_{s+r}}{a_s} & 0  & \cdots & 0
\end{array}
\right)_{k \times k},
\end{eqnarray}
where $I_{(k-1) \times (k-1)}$ is the identity matrix of size $(k-1) \times (k-1)$.
The characteristic polynomial $ch_C(x)$ of $C$ is given by
\begin{equation*}\label{eqn characteristic pol}
    \det(x I-C)=x^k+\frac{a_{s-1}}{a_s} x^{k-1}+\cdots +\frac{a_0}{a_s} x^{k-s}+\frac{a_{s+1}}{a_s} x^{r-1}+\cdots + \frac{a_{s+r-1}}{a_s}x+\frac{a_{s+r}}{a_s}.
\end{equation*}
Note that the algorithm we give in this paper is the fastest known algorithm to compute $\det(T_{n})$, and was first given in \cite{BP}.
Our method is elementary and more clear. More precisely, we give a new proof of the following theorem:
\begin{thm}[\thmref{thm main3} in \secref{sec proof}]\label{thm main3n}
Let $M$ be the upper left $s \times s$ submatrix of $C^{n}$.
For every integer $n \geq k$,
$\det(T_{n}) = (-1)^{ns} a_s^{n} \cdot \det(M).$
\end{thm}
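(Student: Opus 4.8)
The plan is to interpret $T_n$ as the linear recurrence encoded by its rows, subject to Dirichlet-type boundary conditions, and then to read off $\det(T_n)$ from the transfer matrix of that recurrence, which is conjugate to $C$. First I would observe that the interior rows of $T_n u=0$ are exactly the order-$k$ recurrence $a_{s+r}u_{i-r}+\cdots+a_0 u_i+\cdots+a_s u_{i+s}=0$, and that the top $r$ and bottom $s$ rows are this same recurrence once we extend $u$ by $r$ zeros on the left and $s$ zeros on the right. Thus solving $T_n u=0$ is equivalent to propagating the $k$-dimensional state $X_i=(u_{i-r},\dots,u_{i+s-1})^{\mathsf T}$ by the companion (transfer) matrix $A$ that solves the recurrence for its leading entry $u_{i+s}$. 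This $A$ has the same characteristic polynomial as $C$ and is in fact conjugate to it by a reversal together with transposition, so that the lower-right $s\times s$ block of $A^{n}$ has the same determinant as the upper-left $s\times s$ block $M$ of $C^{n}$. The hypothesis $n\ge k$ guarantees that the $r$ left-boundary zeros and the $s$ right-boundary zeros occupy disjoint parts of the state, so that $X_{n+1}=A^{n}X_1$ cleanly transports the $s$ free left parameters $u_1,\dots,u_s$ to the $s$ right constraints.

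To turn this correspondence into the exact identity, rather than a mere statement about simultaneous vanishing, I would compute $\det(T_n)$ through a bordered determinant. I would form the $(n+k)\times(n+k)$ matrix whose first $n$ rows apply the recurrence at positions $1,\dots,n$ across the extended index set $1-r,\dots,n+s$, and whose last $k$ rows are the standard basis vectors selecting the $k$ boundary coordinates. Expanding along these $k$ boundary rows deletes the $k$ boundary columns and returns $\pm\det(T_n)$; on the other hand, row-reducing the $n$ recurrence rows amounts to applying the transfer matrix step by step, each step solving for the leading variable $u_{i+s}$ and thereby contributing one factor of the nonzero leading coefficient $a_s$. After the $r$ left-boundary conditions are imposed, what survives is exactly the $s\times s$ minor of $A^{n}$ pairing the free parameters against the right constraints, which equals $\det(M)$.

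Collecting the three contributions gives $\det(T_n)=\pm\, a_s^{\,n}\det(M)$, and the main work — and the main obstacle — is to pin down the scalar exactly rather than up to proportionality. The factor $a_s^{\,n}$ is forced because there are $n$ recurrence steps, each normalized by $a_s$ (equivalently, $\det(M)$ carries $a_s$ in its denominators and $a_s^{\,n}$ clears them). The sign $(-1)^{ns}$ is the delicate point: it is the parity of the permutation that reorders the extended coordinates into the block/companion arrangement, together with the reversal relating minors of $A^{n}$ to those of $C^{n}$, accumulated across the $n$ steps, and keeping honest track of it through the expansion is where the computation must actually be carried out. I would fix all signs by verifying the normalization on the smallest admissible case $n=k$ (and on a small band, e.g.\ the tridiagonal case $r=s=1$), the general statement then following from the step-by-step structure of the factorization.
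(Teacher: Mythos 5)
Your route is genuinely different from the paper's. The paper never forms a bordered system: it cyclically moves the first $s$ columns of $T_n$ to the back (picking up the single global sign $(-1)^{(n-1)s}$), then performs successive column eliminations and cofactor expansions, each shrinking the matrix by one row and column and contributing a clean factor of $+a_s$ with no sign, until only the block $[B_k\,|\,C^{n-k}A_0]$ and then the upper $s\times s$ block of $C^{n-s}A_0$ remain; a final lemma converts that block into $(-a_s)^s$ times the upper-left $s\times s$ block of $C^n$. Your transfer-matrix/bordered-determinant scheme is essentially the classical Trench and Bini--Pan derivation to which the paper is explicitly offering an elementary alternative. It is a viable and arguably more conceptual strategy, but it fronts exactly the bookkeeping that the paper's column-operation scheme is designed to avoid.

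The genuine gap is the sign. You defer $(-1)^{ns}$ to ``verify at $n=k$ and on a small band, then appeal to the step-by-step structure,'' but that is not yet an argument. In your setup the $n$-dependence of the sign enters through the Laplace expansion along the $k$ boundary rows: the positions of the deleted boundary columns (indices $1-r,\dots,0$ and $n+1,\dots,n+s$ inside the $(n+k)$-column array) depend on $n$, so the permutation sign of that expansion must be computed as a function of $n$, not sampled at one value. Your heuristic that the sign ``accumulates across the $n$ steps'' is also not automatic --- in the paper's version each elimination step contributes $+a_s$ with no sign at all, and the entire factor $(-1)^{ns}$ comes from one global column permutation together with the $(-a_s)^s$ of the final block reduction. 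Until you either exhibit a one-step recursion with an explicitly computed, $n$-independent sign factor, or compute the Laplace sign directly, the identity is established only up to a sign that could depend on $n$. A second, smaller gap: the claim that the lower-right $s\times s$ block of $A^n$ has the same determinant as the upper-left $s\times s$ block of $C^n$ rests on an unverified conjugation (reversal plus transposition) between the two companion forms; it is true and routine, since determinants are insensitive to transposition and to simultaneous row-and-column reversal, but it is doing real work in your argument and must be written out.
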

After this theorem, our attention focuses on the computations of powers of $C$. Thus, we give brief discussion of various methods of computing $C^{n}$ in \secref{sec comp Cn}.

Closed form formulas for $\det(T_{n})$ can be given in terms of the roots of $ch_C(x)$ as previously described in \cite{T}. In \secref{sec pentadiagonals}, we illustrate how this is possible for pentadiagonal Toplitz matrices, and give explicit formulas.

The method given here is effective as long as $k$, the number of nonzero diagonals of $T$, is not close to $n$ which is the number of rows of  $T$.

\section{A fast algorithm for computing $\det(T_n)$}\label{sec proof}

In this section, we give an elementary algorithm for computing $\det(T_{n})$,
which is the fastest known algorithm so far.

First, we describe the outline of the algorithm as follows.
%
We move the first $s$ column vectors of $T_n$ and make them the last vectors, successively. This gives a matrix $P$. If
the column vectors of $T_n$ are $\{ C_1, C_2, \ldots, C_n \}$, then the column vectors of $P$ are
$\{ C_{s+1}, C_{s+2}, \ldots, C_n, C_{1}, \ldots, C_{s} \}$. Note that $\det(T_{n})=(-1)^{(n-1)s}\det(P)$.
Then we multiply the first column by suitable terms and add to the last $s$ columns of $P$ so that the only nonzero entry in the first row will be $a_s$. If the resulting matrix is $P'$, $\det(P)$ is nothing but $a_s$ times the determinant of the cofactor $P_{1,1}'$ of $P'$. We note that  $P_{1,1}'$ is a matrix of size $(n-1)\times (n-1)$, and it is of similar form as $P$. Following the same procedure applied to $P$, we relate $\det(P_{1,1}')$ to the determinant of a matrix of size $(n-2)\times (n-2)$. Continuing in this way, the problem of computing $\det(T_{n})$ can be reduced to the computation of the determinant of a $k \times k$ matrix, and this matrix can be computed easily as it is $n$-th power of a $k \times k$ companion matrix. Next, we describe this algorithm in detail. To be precise, we introduce some notation and deduce some results.

Let $f:\RR^k \longrightarrow \RR^k$ be a linear transformation given by $f([x_1, \, x_2, \, \ldots, \, x_k]^t)
=[x_2-x_1 \frac{a_{s-1}}{a_s}, \, x_3-x_1 \frac{a_{s-2}}{a_s}, \, \ldots, \, x_{s+1}-x_1 \frac{a_{0}}{a_s}, \, x_{s+2}-x_1 \frac{a_{s+1}}{a_s}, \, \ldots, \, x_{k}-x_1 \frac{a_{k-1}}{a_s}, \, -x_1 \frac{a_{k}}{a_s}]^t$, where $v^t$ is the transpose of a vector $v$. Let $F$ be a map sending a $k \times s$ matrix $A$ to another $k \times s$ matrix $F(A)$ by applying $f$ to every column of $A$. That is, if the columns of $A$ are $\{ C_1, C_2, \ldots, C_s \}$, then the columns of $F(A)$ are $\{ f(C_1), f(C_2), \ldots, f(C_s) \}$.

We define a sequence of $k \times s$ matrices $(A_{i})_{i\geq 0}$ recursively by setting
$A_{i+1}=F(A_i)$ for every integer $i \geq 0$, and by taking the following initial value:
\begin{eqnarray*}\label{ eqn mat A0}
A_{0}=\left(
\begin{array}{ccccc}
a_0 & a_1 & \cdots & a_{s-2}& a_{s-1} \\
a_{s+1} & a_0 & a_1 & \cdots & a_{s-2} \\
\vdots & \ddots& \ddots & \ddots & \vdots \\
a_{s+r} & & \ddots &  \ddots & \vdots \\
 & \ddots & & \ddots & a_0\\
& & \ddots &  & a_{s+1}\\
& & & \ddots & \vdots\\
\mathbf{0} & & &  &a_{s+r}
\end{array}
\right)_{k \times s}.
\end{eqnarray*}

Let $\{v_1, \, v_2, \, \ldots, \, v_k \}$ be the standard basis, consisting of column vectors, of $\RR^k$. Note that
$f(v_1)=[\frac{-a_{s-1}}{a_s}, \, \ldots, \,  \frac{-a_1}{a_s}, \, \frac{-a_0}{a_s}, \, \frac{-a_{s+1}}{a_s}, \, \ldots, \,  \frac{-a_{s+r}}{a_s}]^t$
and $f(v_i)=v_{i-1}$ for each $i=2,\, 3, \, \ldots, \, k$. Thus, the matrix $C$ given in \eqnref{eqn mat C} is nothing but the matrix representation of the linear transformation $f : \RR^k \longrightarrow \RR^k$.

\begin{lem}\label{lem product mat}
Let the transformation $F$ and the matrices $A_{i}$ and $C$ be as defined before.
We can express $F$ in terms of $C$ as follows:
$\displaystyle{F(A_{i})=C \cdot A_{i}.}$
Therefore, for any integer $i \geq 1$ we have
$A_{i}=C^i \cdot A_{0}$.
\end{lem}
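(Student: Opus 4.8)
The plan is to reduce the entire lemma to the single fact, already recorded just before the statement, that $C$ is the matrix of the linear map $f$ with respect to the standard basis. Once the identity $f(v)=C\cdot v$ is established for every $v\in\RR^k$, both assertions follow almost formally: the first by reading the product $C\cdot A_i$ column by column, and the second by a one-line induction on $i$.

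First I would verify that $f(v)=C\cdot v$ for every column vector $v\in\RR^k$. Since $f$ is linear, it is enough to check the equality on the standard basis $\{v_1,\ldots,v_k\}$. The excerpt already supplies both cases: for $i\geq 2$ one has $f(v_i)=v_{i-1}$, which is exactly the $i$-th column of $C$ because the block $I_{(k-1)\times(k-1)}$ sitting in the last $k-1$ columns sends $v_i$ to $v_{i-1}$; and the displayed formula for $f(v_1)$ coincides entry-for-entry with the first column of $C$ in \eqnref{eqn mat C}. Thus $f$ and left multiplication by $C$ agree on a basis, hence on all of $\RR^k$. Next I would transfer this from vectors to the matrix $A_i$: writing the columns of $A_i$ as $\{C_1,\ldots,C_s\}$, the map $F$ produces the matrix with columns $\{f(C_1),\ldots,f(C_s)\}=\{C C_1,\ldots,C C_s\}$, and the $j$-th column of the product $C\cdot A_i$ is precisely $C C_j$. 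Comparing columns yields $F(A_i)=C\cdot A_i$, which is the first claim.

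Finally, the formula $A_i=C^i\cdot A_0$ follows by induction on $i$. The case $i=0$ is the convention $C^0=I$, and $i=1$ is the identity just proved applied to $A_0$. For the inductive step I would compute $A_{i+1}=F(A_i)=C\cdot A_i=C\cdot(C^i\cdot A_0)=C^{i+1}\cdot A_0$, using the recursive definition $A_{i+1}=F(A_i)$ together with the first part and associativity of matrix multiplication.

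The only place demanding care, rather than a genuine obstacle, is the index bookkeeping in the first step: one must line up the entries of $f(v_1)$, which run through the coefficients $a_{s-1},\ldots,a_1,a_0,a_{s+1},\ldots,a_{s+r}$, against the first column of $C$ exactly as displayed, and confirm that the shift $f(v_i)=v_{i-1}$ reproduces the identity block $I_{(k-1)\times(k-1)}$. After this verification the remaining steps are purely formal.
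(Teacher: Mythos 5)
Your proposal is correct and follows essentially the same route as the paper: it uses the observation, recorded just before the lemma, that $C$ is the matrix representation of $f$ in the standard basis, reads off $F(A_i)=C\cdot A_i$ column by column, and then iterates to get $A_i=C^i\cdot A_0$. Your write-up simply spells out the basis verification and the induction in more detail than the paper's two-line argument.
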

\begin{proof}
Note that $C \cdot A_{i}$ is nothing but $A_{i+1}$.
Thus, the proof of the first part follows from the definition of $F$.
Applying the first part successively to $A_{0}$ gives
$F^i(A_{0})=C^i \cdot A_{0}$. On the other hand, $F^i(A_{0})=A_{i}$. This completes the proof.
\end{proof}

Let $O_{(n-k) \times s}$ be the zero matrix of size $(n-k) \times s$. We set
\begin{eqnarray*}\label{eqn mat Bn}
\qquad \, B_{n}=\left(
\begin{array}{ccc}
a_s & & \mathbf{0} \\
a_{s-1} & a_s & \\
\vdots & \ddots& \ddots \\
a_{0} & & \ddots \\
a_{s+1} & a_0 & \\
\vdots & \ddots & \ddots \\
a_{s+r-1} & & \ddots\\
a_{s+r} & a_{s+r-1} &\\
& \ddots & \ddots \\
\mathbf{0} & &\ddots
\end{array}
\right)_{n \times (n-s)}, \quad
P_{n,i}=\left(
\begin{array}{c|c}
& A_i \\ \cline{2-2}
B_n & \\
 & O_{(n-k) \times s}
\end{array}
\right)_{n \times n}.
\end{eqnarray*}
Next, we relate the determinants of $T_{n}$ and $P_{n,0}$.
\begin{lem}\label{lem Tn and Pn0}
For any $n \geq k$ and $s \geq 0$, $\det(T_{n})=(-1)^{(n-1)s}\det(P_{n,0})$.
\end{lem}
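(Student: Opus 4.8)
The plan is to show that $P_{n,0}$ is precisely the matrix $P$ described in the outline at the start of this section, namely the matrix obtained from $T_n$ by moving its first $s$ columns, in order, to the last $s$ positions; the stated sign will then follow from the parity of that column permutation. Write $C_1, \ldots, C_n$ for the columns of $T_n$. From the band structure of $T_n$, the entry in row $i$ and column $j$ equals $a_{j-i}$ when $0 \le j-i \le s$, equals $a_{s+(i-j)}$ when $1 \le i-j \le r$, and vanishes otherwise; this gives an explicit description of each $C_j$ that I will match against the two column blocks of $P_{n,0}$.

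First I would match the last $s$ columns. For $1 \le j \le s$ the nonzero entries of $C_j$ occupy rows $i = j - m$ (value $a_m$, with $0 \le m \le j-1$) and rows $i = j + p$ (value $a_{s+p}$, with $1 \le p \le r$); the lowest such row is $j + r \le s + r = k$, so every nonzero entry of $C_j$ lies within the first $k$ rows. Reading off the columns of $A_0$ shows that the $j$-th column of $A_0$ is exactly the top $k$ entries of $C_j$, so the block consisting of $A_0$ stacked over $O_{(n-k) \times s}$ reproduces $C_1, \ldots, C_s$ verbatim. This is where the hypothesis $n \ge k$ enters, since it guarantees that the zero padding $O_{(n-k) \times s}$ is well defined and that no nonzero entry of $C_j$ is lost.

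Next I would match the first $n-s$ columns against $C_{s+1}, \ldots, C_n$. For $j = s+1$ the column $C_{s+1}$ reads $[a_s, a_{s-1}, \ldots, a_0, a_{s+1}, \ldots, a_{s+r}, 0, \ldots, 0]^t$, which is exactly the first column of $B_n$; since both $T_n$ and $B_n$ are banded with the same diagonal pattern shifted down by one as the column index increases, the $\ell$-th column of $B_n$ coincides with $C_{s+\ell}$ for every $\ell$, the lower band being correctly truncated against the bottom edge in the final columns. Hence the first $n-s$ columns of $P_{n,0}$ are $C_{s+1}, \ldots, C_n$.

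Combining the two matches identifies $P_{n,0}$ with the matrix whose columns are $C_{s+1}, \ldots, C_n, C_1, \ldots, C_s$. This is obtained from $T_n$ by $s$ successive left cyclic shifts of the columns; each such shift is an $n$-cycle of sign $(-1)^{n-1}$, so the total column permutation has sign $(-1)^{(n-1)s}$, giving $\det(P_{n,0}) = (-1)^{(n-1)s}\det(T_n)$ and hence the claim, since this sign is its own inverse. I expect the main obstacle to be the careful bookkeeping of the band boundaries—verifying that the truncation of the sub- and super-diagonals at the top and bottom of $T_n$ is matched exactly by the shapes of $A_0$ and $B_n$—rather than any conceptual difficulty; the degenerate case $s = 0$ (where there is nothing to permute and $P_{n,0} = T_n$) should be checked separately but is immediate.
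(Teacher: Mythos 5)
Your proposal is correct and follows essentially the same route as the paper: the paper's proof simply asserts that the matrix obtained by cyclically moving the first $s$ columns of $T_n$ to the end is $P_{n,0}$ and reads off the sign $(-1)^{(n-1)s}$ from the $s$ successive $n$-cycles. You supply the column-by-column verification that the paper leaves implicit, but the underlying argument is identical.
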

\begin{proof}
We move the first $s$ column vectors of $T_n$ and make them the last vectors, successively. This gives the matrix $P$. More precisely, if the column vectors of $T_n$ are $\{C_1, C_2, \ldots, C_n \}$, then the column vectors of $P$ are
$\{ C_{s+1}, C_{s+2}, \ldots, C_n, C_{1}, \ldots, C_{s} \}$.
Note that the matrix $P$ is nothing but $P_{n,0}$. Therefore, $\det(T_{n})=(-1)^{(n-1)s}\det(P_{n,0})$.
\end{proof}

The following theorem is a generalization of \cite[Theorem 2.3]{ZC}:
\begin{thm}\label{thm old main1}
Let $s$ and $k$ be as before.
For every integers $n \geq k$ and $i$ such that $0 \leq i \leq n-k$,
we have
$$\det(T_{n}) = (-1)^{(n-1)s} a_{s}^{i} \cdot \det(P_{n-i,i}).$$
In particular,
$$\det(T_{n}) = (-1)^{(n-1)s} a_{s}^{n-k} \cdot \det(P_{k,n-k}).$$
\end{thm}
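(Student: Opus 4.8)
The plan is to prove the identity $\det(T_{n}) = (-1)^{(n-1)s} a_{s}^{i}\det(P_{n-i,i})$ by induction on $i$, and then read off the displayed ``in particular'' statement by setting $i=n-k$. The base case $i=0$ is precisely \lemref{lem Tn and Pn0}, since $a_s^0=1$ and $P_{n,0}=P_{n-0,0}$. For the inductive step it suffices to establish the single-step reduction
$$\det(P_{m,i}) = a_s \cdot \det(P_{m-1,i+1}), \qquad m = n-i,$$
because multiplying the hypothesis at level $i$ by one more factor of $a_s$ then yields the claim at level $i+1$; carrying out the step for $0 \le i \le n-k-1$ keeps $m-1 \ge k$, so $P_{m-1,i+1}$ is still defined, and reaches the full range $0 \le i \le n-k$.

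To prove the one-step reduction I would inspect the first row of $P_{m,i}$. By the lower-triangular banded shape of $B_m$, its first column is $b=[a_s,a_{s-1},\dots,a_0,a_{s+1},\dots,a_{s+r},0,\dots]^t$ and every later column of the $B_m$ block has a zero in row $1$; hence the only nonzero entry of row $1$ coming from $B_m$ is the top-left $a_s$. The remaining nonzero entries of row $1$ are the entries $[\alpha_1,\dots,\alpha_s]$ of the first row of $A_i$, sitting in the last $s$ columns. First I would clear these: for each $j=1,\dots,s$ subtract $\tfrac{\alpha_j}{a_s}$ times the first column $b$ from the $j$-th of the last $s$ columns. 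This leaves $a_s$ as the sole nonzero entry of row $1$ without changing the determinant, so Laplace expansion along row $1$ gives $\det(P_{m,i}) = a_s\cdot\det(Q)$, where $Q$ is obtained by deleting the first row and first column.

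The crux is to identify $Q$ with $P_{m-1,i+1}$. On the $B_m$ block, deleting the first row and first column shifts the band up and to the left, reproducing $B_{m-1}$, and the dimensions $(m-1)\times(m-s-1)$ match. On the last $s$ columns I claim the combined effect of the column operation followed by the row deletion is exactly the map $f$ applied columnwise. Indeed, writing a column of $A_i$ as $[x_1,\dots,x_k]^t$ with $x_1=\alpha_j$, the operation subtracts $\tfrac{x_1}{a_s}$ times $b$; after discarding the now-vanishing first entry, the surviving $k$ entries are $x_2-x_1\tfrac{a_{s-1}}{a_s},\dots,x_k-x_1\tfrac{a_{k-1}}{a_s},-x_1\tfrac{a_k}{a_s}$, where the last is produced by the entry $a_{s+r}=a_k$ of $b$ acting on the zero in row $k+1$. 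This is precisely $f([x_1,\dots,x_k]^t)$. Hence the top $k\times s$ block of $Q$ in the last $s$ columns is $F(A_i)=C\cdot A_i=A_{i+1}$ by \lemref{lem product mat}, while the rows below are zero, giving the block $O_{((m-1)-k)\times s}$. Thus $Q=P_{m-1,i+1}$, which completes the reduction and hence the induction.

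The main obstacle I expect is the index bookkeeping: matching the entries of $b$ (running $a_s,a_{s-1},\dots,a_0,a_{s+1},\dots,a_{s+r}$ down the column) against the coefficient pattern in the definition of $f$, and in particular confirming that the bottom coefficient $-x_1 a_k/a_s$ of $f$ arises correctly from the $(k+1)$-st entry $a_{s+r}$ of $b$ acting on a zero row of $A_i$. Once that single alignment is checked, the rest is routine shape-matching of the two banded blocks and verification of the matrix sizes.
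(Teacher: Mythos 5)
Your proposal is correct and follows essentially the same route as the paper: the paper likewise clears the first row of $P_{n-i+1,i-1}$ by adding multiples of its first column to the last $s$ columns, expands along the first row to pick up a factor of $a_s$, and identifies the $(1,1)$-minor with $P_{n-i,i}$, iterating this one-step reduction. Your version merely packages the iteration as an induction on $i$ and spells out (more explicitly than the paper does) why the surviving block is $F(A_i)=A_{i+1}$; the index verification you flag as the main obstacle does check out.
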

\begin{proof}
By \lemref{lem Tn and Pn0}, $\det(T_n)=(-1)^{(n-1)s} \det(P_{n,0})$.
Therefore, we are done if $n=k$ or $i=0$. For the rest of the proof, we assume $n>k$ and $i>0$.

%
For any integer $i$ with $1 \leq i \leq n-k$,
we multiply the first column of $P_{n-i+1,i-1}$ by $\frac{-b_j}{a_s}$ and add it to its $(n-i-s+j+2)$-th column for each $j=0, \, 1, \, \ldots, \, s-1$, where $b_j$ is the $(1,j+1)$-th entry of $A_{i-1}$. Let $R_i$ be the resulting matrix.
Clearly, $\det(P_{n-i+1,i-1})=\det(R_i)$.
The only nonzero entry in the first row of $R_i$ is the $(1,1)$-th entry with value $a_s$.
Note that $(1,1)$-th minor of $R_i$ is nothing but $P_{n-i,i}$. Expanding the determinant of
$R_i$ using the first row gives
$\det(R_i)=a_s \cdot \det(P_{n-i,i})$. Thus, $\det(P_{n-i+1,i-1})=a_s \cdot \det(P_{n-i,i})$.

This gives that $\det(P_{n,0}) = a_{s}^{i} \cdot \det(P_{n-i,i})$ for each integer $i$ with $0 \leq i \leq n-k$.
Then the result follows.

%
%
\end{proof}

\begin{thm}\label{thm main1}
For every integer $n \geq k$,
we have
$$\det(T_{n}) = (-1)^{(n-1)s} a_s^{n-k} \cdot \det([B_k,C^{n-k} \cdot A_0]).$$
\end{thm}
\begin{proof}
By the definition $P_{k,n-k}=[B_k \, | \, A_{n-k}]$. On the other hand, $A_{n-k}=C^{n-k} \cdot A_0$ by \lemref{lem product mat} with $i=n-k$. Then the result follows from the second part of \thmref{thm old main1}.
\end{proof}
We can improve \thmref{thm main1} as follows:
\begin{thm}\label{thm main2}
Let $M$ be the upper $s \times s$ submatrix of $C^{n-s} \cdot A_0$.
For every integer $n \geq k$,
we have
$$\det(T_{n}) = (-1)^{(n-1)s} a_s^{n-s} \cdot \det(M).$$
\end{thm}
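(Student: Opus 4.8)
The plan is to continue the determinant‑reduction procedure of \thmref{thm old main1} for exactly $r$ additional steps, starting from the $k \times k$ matrix $[B_k \,|\, C^{n-k}A_0]$ that appears in \thmref{thm main1}. Equivalently, I would isolate the following as a stand‑alone lemma: for every $k \times s$ matrix $X$ one has $\det([B_k \,|\, X]) = a_s^{\,r}\det(M_X)$, where $M_X$ is the upper $s \times s$ submatrix of $C^r X$. Granting this, the theorem is immediate: taking $X = C^{n-k}A_0 = A_{n-k}$ gives $C^r X = C^{n-s}A_0$, so $M_X = M$, and substituting into \thmref{thm main1} yields $\det(T_n) = (-1)^{(n-1)s}a_s^{n-k}\cdot a_s^{\,r}\det(M)$; since $(n-k)+r = n-s$, this is exactly the claimed formula.

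To prove the lemma I would induct on $r$, the base case $r=0$ being trivial since then $B_k$ has no columns and $[B_k\,|\,X]=X=C^0X$. For the inductive step I would perform a single reduction on $[B_k\,|\,X]$ exactly as in the proof of \thmref{thm old main1}: the first row of $[B_k\,|\,X]$ is $(a_s,0,\dots,0\,|\,b_0,\dots,b_{s-1})$, where the zeros come from the shifted columns of $B_k$ and the $b_j$ are the top entries of $X$; adding suitable multiples of the first column to the last $s$ columns clears the $b_j$, and cofactor expansion along the first row extracts the factor $a_s$ and leaves a $(k-1)\times(k-1)$ minor. The crucial computation is to identify this minor: using the explicit form of $f$ together with the fact that the first column of $B_k$ lists precisely the companion coefficients $a_{s-1},\dots,a_0,a_{s+1},\dots,a_{s+r-1}$, I would check that the modified last $s$ columns, once the top row is deleted, are exactly the top $k-1$ rows of $CX$. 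Thus the minor has the form $[B_{k-1}\,|\,X']$, with $X'$ equal to the top $k-1$ rows of $CX$, to which the inductive hypothesis (now with parameters $r-1$ and the size‑$(k-1)$ companion matrix) applies.

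The step I expect to be the main obstacle is reconciling this truncation with the iteration. At each reduction I delete the top row of the ambient matrix, yet the rewriting above only recovers the top $k-1$ rows of $CX$ and silently drops its bottom entry, the one produced by the coefficient $a_{s+r}=a_k$ that is absent from the first column of $B_k$. I must therefore verify that running the truncated recursion for all $r$ steps still produces the \emph{genuine} upper $s\times s$ block of $C^rX$, and in particular that the smaller companion matrices appearing in successive inductive steps yield the same top rows as $C$ itself. This follows from the one‑sided propagation of $C$: since $(CY)_i$ for $i\le k-1$ depends only on $Y_1$ and $Y_{i+1}$, the top $k-m$ rows of $C^{m}X$ are determined by the top $k-m+1$ rows of $C^{m-1}X$, so no information from the discarded bottom rows is ever needed to compute the rows that survive. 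Carrying this bookkeeping through the induction, after $r$ steps all $r$ columns of $B_k$ have been consumed, the factor $a_s$ has been extracted $r$ times, and the surviving $s\times s$ matrix is precisely the upper block $M_X$ of $C^rX$, which completes the proof of the lemma and hence of the theorem.
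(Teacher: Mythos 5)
Your proposal is correct and follows essentially the same route as the paper: the paper also performs $r$ successive column reductions on $[B_k \,|\, A_{n-k}]$, extracting a factor $a_s$ each time and identifying the surviving $(1,1)$-minor with the truncated matrix $[U_{i+1}\,|\,V_{i+1}]$, where $V_{i+1}$ is precisely the top $k-i-1$ rows of $C\cdot A_{n-k+i}$ --- exactly your $X'$. Your packaging as an induction on $r$ with an explicit lemma, and your justification of why the dropped bottom rows never affect the surviving upper block, makes explicit a point the paper's proof leaves implicit, but the underlying computation is the same.
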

\begin{proof}
The proof follows by similar arguments given in the proof of \thmref{thm old main1}, but we need to introduce new notation for precise and shorter description.

We have
\begin{equation}\label{eqn Tn and Bk}
\begin{split}
\det(T_{n}) = (-1)^{(n-1)s} a_s^{n-k} \cdot \det([B_k \, | \, A_{n-k}])
\end{split}
\end{equation}
by the second part of \thmref{thm old main1} and the proof of \thmref{thm main1}.

Let $U_i$ denote the lower right $(k-i \times r-i)$ submatrix of $B_k$, and let $V_i$ denote the upper $k-i \times s$ submatrix of
$A_{n-k+i}$ for $i=0, \, 2, \, \ldots, \, r$. Note that  $[U_r \, | \, V_{r}]=V_{r}$, the upper $s \times s$ submatrix of $A_{n-s}$, and $[U_0 \, | \, V_{0}]=[B_k \, | \, A_{n-k}]$.

For any integer $i$ with $0 \leq i < r$,
we multiply the first column of $[U_i \, | \, V_{i}]$ by $\frac{-b_j}{a_s}$ and add it to its $(r-i+j+1)$-th column for each $j=0, \, 1, \, \ldots, \, s-1$, where $b_j$ is the $(1,j+1)$-th entry of $V_{i}$. Let $S_i$ be the resulting matrix.
Clearly, $\det([U_i \, | \, V_{i}])=\det(S_i)$.
The only nonzero entry in the first row of $S_i$ is the $(1,1)$-th entry with value $a_s$.
Note that $(1,1)$-th minor of $S_i$ is nothing but $[U_{i+1} \, | \, V_{i+1}]$. Expanding the determinant of
$S_i$ using the first row gives
$\det(S_i)=a_s \cdot \det([U_{i+1} \, | \, V_{i+1}])$. Thus, $\det([U_{i} \, | \, V_{i}])=a_s \cdot \det([U_{i+1} \, | \, V_{i+1}])$.

This gives that $\det([U_{0} \, | \, V_{0}])=a_s^r \cdot \det([U_{r} \, | \, V_{r}])$. That is,
$\det([B_k \, | \, A_{n-k}])=a_s^r \cdot \det(V_r)$. Combining this with \eqnref{eqn Tn and Bk}, we see that
$\det(T_{n}) = (-1)^{(n-1)s} a_s^{n-s} \cdot \det(M)$, where $M$ is the upper $s \times s$ submatrix of $A_{n-s}$.
On the other hand,  $A_{n-s}=C^{n-s} \cdot A_0$ by \lemref{lem product mat}. Then the result follows.
\end{proof}

Note that \thmref{thm main2} outlines a fast algorithm to compute $\det(T_{n})$. However, we look for even simpler formula, which is possible if we find an answer to the following question:

Can we get a simpler expression for the determinant of the upper $s \times s$ submatrix of $C^{n-s} \cdot A_0$?

\lemref{lem formula for CA0} below provides a fairly simple expression for this determinant.
First, we give some well-known facts that we will use.

For each integer $i=1, \, 2, \, \dots \, k$, suppose $u_{i,n}$ satisfies the recurrence relation
$$x_{n+k}= -\frac{a_{s-1}}{a_s} x_{n+k-1}-\cdots -\frac{a_0}{a_s} x_{n+k-s}-\frac{a_{s+1}}{a_s} x_{n+r-1}-\cdots - \frac{a_{s+r-1}}{a_s}x_{n+1}-\frac{a_{s+r}}{a_s}x_{n},$$ and that their initial values are given by the first equality below, where $I_{k \times k}$ is the identity matrix of size $k \times k$. Then the powers of $C$ are given by the second equality below:
\begin{eqnarray*}
\left(
\begin{array}{cccc}
u_{1,k-1} & u_{1,k-2} & \cdots & u_{1,0} \\
u_{2,k-1} & u_{2,k-2} & \cdots & u_{2,0} \\
 \vdots & \vdots & \cdots & \vdots \\
u_{k,k-1} & u_{k,k-2} & \cdots & u_{k,0}
\end{array}
\right)
= I_{k \times k}, \quad
C^n=\left(
\begin{array}{cccc}
 u_{1,n+k-1} & u_{1,n+k-2} & \cdots & u_{1,n} \\
u_{2,n+k-1} & u_{2,n+k-2} & \cdots & u_{2,n} \\
\vdots & \vdots & \cdots & \vdots \\
u_{k,n+k-1} & u_{k,n+k-2} & \cdots & u_{k,n}
\end{array}
\right).
\end{eqnarray*}

\begin{lem}\label{lem formula for CA0}
For every positive integers $n$ and $s$ with $n \geq s+1$, the determinant of the upper $s \times s$ submatrix of  $C^{n-s} \cdot A_0$ is $(-a_s)^s$ times the determinant of the upper left $s \times s$ submatrix of $C^{n}$.
\end{lem}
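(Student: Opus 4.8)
The plan is to compute the entries of the upper $s\times s$ submatrix of $C^{n-s}\cdot A_0$ directly from the description of the powers of $C$ in terms of the sequences $u_{i,m}$, and then to exhibit that submatrix as the image of the upper left $s\times s$ submatrix of $C^{n}$ under a triangular column operation of determinant $(-a_s)^s$. I will use two inputs: the recorded formula $(C^{m})_{i,j}=u_{i,\,m+k-j}$ read off from the stated expression for the powers of $C$, and the explicit band form of $A_0$, whose $(l,j)$ entry equals $a_{j-l}$ for $l\le j$, equals $a_{s+l-j}$ for $j<l\le j+r$, and vanishes for $l>j+r$.

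First I would fix $i,j$ with $1\le i,j\le s$ and expand $(C^{n-s}A_0)_{i,j}=\sum_{l=1}^{k}u_{i,\,n-s+k-l}\,(A_0)_{l,j}$, splitting the sum over the two nonzero bands of $A_0$. Reindexing the two bands by $u=j-l$ and $w=l-j$ respectively puts this in the form
\[
(C^{n-s}A_0)_{i,j}=\sum_{u=0}^{j-1}a_u\,u_{i,\,n-s+k-j+u}+\sum_{w=1}^{r}a_{s+w}\,u_{i,\,n-s+k-j-w}.
\]

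The crux is to recognize this as a truncation of the defining recurrence. Applying that recurrence to the solution $u_{i,\cdot}$ with its index centered at $n-s+k-j$ (equivalently, with starting index $p=n-j$) yields the vanishing relation $\sum_{u=0}^{s}a_u\,u_{i,\,n-s+k-j+u}+\sum_{w=1}^{r}a_{s+w}\,u_{i,\,n-s+k-j-w}=0$; since $n\ge s+1$, the smallest index occurring is $n-s\ge 1$, so the recurrence is invoked only where the $u_{i,\cdot}$ are defined. Subtracting this zero from the displayed expression cancels the entire $w$-sum together with the low part of the $u$-sum, leaving $(C^{n-s}A_0)_{i,j}=-\sum_{u=j}^{s}a_u\,u_{i,\,n-s+k-j+u}$.

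Finally I would translate the surviving indices back into columns of $C^{n}$. Writing $u=s+j-j'$, the index $n-s+k-j+u$ becomes $n+k-j'$, so $u_{i,\,n-s+k-j+u}=(C^{n})_{i,j'}$ with $j'$ ranging over $j,j+1,\dots,s$; hence $(C^{n-s}A_0)_{i,j}=-\sum_{j'=j}^{s}a_{s+j-j'}(C^{n})_{i,j'}$. This says precisely that the upper $s\times s$ submatrix $M$ of $C^{n-s}A_0$ equals $M'\cdot Y$, where $M'$ is the upper left $s\times s$ submatrix of $C^{n}$ and $Y$ is the $s\times s$ lower triangular matrix with $Y_{j',j}=-a_{s+j-j'}$ for $j'\ge j$ and $0$ otherwise. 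Its diagonal entries are all $-a_s$, so $\det Y=(-a_s)^s$ and therefore $\det M=(-a_s)^s\det M'$, which is the assertion. The hard part will be the index bookkeeping in the crux step: correctly aligning the truncated sum with a single instance of the recurrence and verifying that the leftover terms assemble into a triangular, rather than a full, column operation.
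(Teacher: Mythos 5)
Your proposal is correct and follows essentially the same route as the paper: both expand the entries of $C^{n-s}A_0$ via the band structure of $A_0$, invoke the defining recurrence of the $u_{i,\cdot}$ to collapse each entry to $-\sum_{j'\ge j} a_{s+j-j'}(C^n)_{i,j'}$, and then extract the factor $(-a_s)^s$ from the resulting triangular column transformation. The only (cosmetic) difference is that you package the paper's successive column operations $M_0\mapsto\cdots\mapsto M_{s-1}$ as a single factorization $M=M'Y$ with $Y$ lower triangular of determinant $(-a_s)^s$.
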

\begin{proof}
Let $c_{l,m}$ be the $(l,m)$-th entry of the matrix $C^{n-s} \cdot A_0$ of size $k \times s$. Since $c_{l,m}$ is $l$-th row of $C^{n-s}$ times $m$-th column of $A_0$, we have
$$c_{l,m}=\sum_{j=1}^{m}a_{m-j} u_{l,n+r-j}+\sum_{i=1}^{r}a_{s+i}u_{l,n+r-m-i}.$$
Using the recursive formula of $u_{l,n+k-m}$, we can express the above equality as follows:
$$c_{l,m}=-\sum_{i=0}^{s-m}a_{s-i} u_{l,n+k-m-i}.$$
Now, we note that $m$-th column of $C^{n-s} \cdot A_0$ contains linear combinations of its last $s-m$ columns, and that the determinant does not change by adding a multiple of a column to some other column.

Let $M_0$ be the upper $s \times s$ submatrix of  $C^{n-s} \cdot A_0$. For every $t=0, \, \dots s-2$, suppose $M_{t+1}$ is the matrix obtained from $M_t$ by multiplying its $(s-t)$-th column by
$-\frac{a_{s-j}}{a_s}$ and then adding to its $(s-t-j)$-th column for each $j=1, \, 2, \dots, \, s-t-1$. Note that $\det(M_{0})=\det(M_{s-1})$ and that $(l,m)$-th entry of $M_{s-1}$ is $-a_{s} u_{l,n+k-m}$. That is, $\det(M_{s-1})$ is $(-a_s)^s$ times the determinant of a matrix with $(l,m)$-th entry $u_{l,n+k-m}$, which is nothing but the upper left $s \times s$ submatrix of $C^{n}$. This completes the proof.
\end{proof}

Next, we give the main result of this paper. Namely, the computation of $\det(T_{n})$ is basically reduced to the computation of $n$-th power of the $k \times k$ matrix $C$, and the computation of the determinant of an $s \times s$ matrix:
\begin{thm}\label{thm main3}
Let $M$ be the upper left $s \times s$ submatrix of $C^{n}$.
For every integer $n \geq k$,
$$\det(T_{n}) = (-1)^{ns} a_s^{n} \cdot \det(M).$$
\end{thm}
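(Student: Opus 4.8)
The plan is to obtain this statement by simply chaining together the two results established immediately beforehand, \thmref{thm main2} and \lemref{lem formula for CA0}. No genuinely new argument is needed; the entire content of the proof is substituting one into the other and collecting the scalar prefactors correctly.

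First I would invoke \thmref{thm main2}. Writing $M'$ for the upper $s \times s$ submatrix of $C^{n-s} \cdot A_0$, that theorem gives
\[
\det(T_{n}) = (-1)^{(n-1)s} a_s^{n-s} \cdot \det(M').
\]
The hypothesis $n \geq k$ required here is precisely the hypothesis of the theorem we are proving, so there is nothing extra to check at this stage.

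Next I would rewrite $\det(M')$ in terms of a true power of $C$ using \lemref{lem formula for CA0}, which states that $\det(M') = (-a_s)^s \cdot \det(M)$, where $M$ is the upper left $s \times s$ submatrix of $C^{n}$. Before applying the lemma I would confirm that its hypothesis $n \geq s+1$ is satisfied: since $k = r + s$ with $r \geq 1$, we have $k \geq s+1$, so the assumption $n \geq k$ already forces $n \geq s+1$.

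Finally I would substitute the second expression into the first and combine the prefactors. Using $(-a_s)^s = (-1)^s a_s^s$, the combined scalar is
\[
(-1)^{(n-1)s}\, a_s^{n-s} \cdot (-1)^s a_s^s
= (-1)^{(n-1)s + s}\, a_s^{\,n}
= (-1)^{ns}\, a_s^{\,n},
\]
which yields $\det(T_{n}) = (-1)^{ns} a_s^{n} \cdot \det(M)$, as claimed. The only point demanding any attention is this exponent bookkeeping, namely checking that the sign $(-1)^{(n-1)s}$ absorbs the extra $(-1)^s$ to become $(-1)^{ns}$ and that $a_s^{n-s} \cdot a_s^{s}$ recombines to $a_s^{n}$; but this is entirely routine, and I anticipate no real obstacle in the argument.
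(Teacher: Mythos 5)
Your proposal is correct and matches the paper's proof exactly: the paper likewise derives Theorem~\ref{thm main3} by combining Theorem~\ref{thm main2} with Lemma~\ref{lem formula for CA0}, and your sign and exponent bookkeeping $(-1)^{(n-1)s}(-1)^s a_s^{n-s}a_s^s=(-1)^{ns}a_s^n$ is right. You actually spell out more detail (the check that $n\geq k$ implies $n\geq s+1$) than the paper does.
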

\begin{proof}
The result follows from \thmref{thm main2} and \lemref{lem formula for CA0}.
\end{proof}
Since $M$ is $s \times s$ matrix, its determinant can be computed in $O(s^3)$ operations. In fact, the power $3$ here can be taken less. More costly part is the computation of $C^n$, which can be done in $O(\frac{3}{2}k^2 \log_2 \frac{n}{k})$ operations as explained at the end of \secref{sec comp Cn}. Therefore, $\det(T_{n})$ can be computed in $O(\frac{3}{2}k^2 \log_2 \frac{n}{k}+s^3)$ operations.

Note that \thmref{thm main3} is very similar to \cite[Prop 2.1]{BP}. However, the proof we gave here is more elementary and clear.

We can use the algorithm described in this section to compute the characteristic polynomial of $T_n$ if we start with $T_{n}-\lambda I$ rather than $T_n$. Therefore,
\thmref{thm main3} can be extended as follows:
\begin{thm}\label{thm character Tn}
Let $C_{\lambda}$
be the matrix obtained from $C$
by replacing $a_0$ by $a_0-\lambda$.
For every integer $n \geq k$,
we have
$$\det(T_{n}-\lambda I) = (-1)^{ns} a_s^{n} \cdot \det(M_{\lambda}),$$
where $I$ is the $n \times n$ identity matrix and $M_{\lambda}$ is the upper left $s \times s$ submatrix of $C_{\lambda}^{n}$.
\end{thm}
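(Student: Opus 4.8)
The plan is to reduce the characteristic polynomial computation to the determinant computation already handled by \thmref{thm main3}. The key observation is that $\det(T_n - \lambda I)$ is itself the determinant of a Toeplitz band matrix of exactly the same shape as $T_n$: subtracting $\lambda I$ only alters the main diagonal, changing each entry $a_0$ into $a_0 - \lambda$, while leaving $r$, $s$, $k$, and all the other diagonals $a_1, \dots, a_s, a_{s+1}, \dots, a_{s+r}$ unchanged. In particular $a_s \neq 0$ and $a_{s+r} \neq 0$ still hold, so $T_n - \lambda I$ is a legitimate matrix of the type treated throughout \secref{sec proof}.

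First I would make this reduction explicit. Define $\widetilde{T}_n = T_n - \lambda I$ and observe that $\widetilde{T}_n$ is the Toeplitz band matrix with the same superdiagonal and subdiagonal data as $T_n$ but with main-diagonal entry $a_0 - \lambda$ in place of $a_0$. Then I would record that the companion matrix associated to $\widetilde{T}_n$, built from \eqnref{eqn mat C} with $a_0$ replaced by $a_0 - \lambda$, is precisely the matrix $C_\lambda$ defined in the statement, since $a_0$ enters $C$ only in the single entry $\frac{-a_0}{a_s}$, which becomes $\frac{-(a_0-\lambda)}{a_s}$. Note that $s$ is unchanged by this substitution, so the relevant submatrix size stays $s \times s$.

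Next I would simply invoke \thmref{thm main3} applied to $\widetilde{T}_n$ in place of $T_n$ and $C_\lambda$ in place of $C$. The hypothesis $n \geq k$ is satisfied by assumption, and all the intermediate constructions of \secref{sec proof} — the column permutation of \lemref{lem Tn and Pn0}, the successive cofactor expansions of \thmref{thm old main1} through \thmref{thm main2}, and the column-reduction of \lemref{lem formula for CA0} — go through verbatim because they depend only on the banded Toeplitz structure and on $a_s \neq 0$, never on the specific value of $a_0$. Therefore \thmref{thm main3} yields $\det(\widetilde{T}_n) = (-1)^{ns} a_s^n \cdot \det(M_\lambda)$, where $M_\lambda$ is the upper left $s \times s$ submatrix of $C_\lambda^n$, which is exactly the claimed formula.

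The only point requiring genuine care — and the one I expect to be the main obstacle in writing a fully rigorous argument — is verifying that the symbolic substitution $a_0 \mapsto a_0 - \lambda$ commutes with the entire chain of reductions, i.e., that running the algorithm on $T_n$ and then substituting produces the same matrices as substituting first and then running the algorithm. This is clear because every step is a polynomial (in fact, rational with fixed denominator $a_s$) operation in the entries, and $a_0$ appears only linearly in $A_0$, in $C$, and in $B_k$; treating $\lambda$ as an indeterminate over $K$ and working over $K(\lambda)$ makes the substitution a ring homomorphism that preserves all determinant identities. I would phrase this as: the argument is identical to that of \thmref{thm main3}, applied with the field $K$ replaced by $K(\lambda)$ and with $a_0$ replaced by $a_0 - \lambda$ throughout, which completes the proof.
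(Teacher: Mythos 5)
Your proposal is correct and matches the paper's approach: the paper offers no formal proof, only the remark that the algorithm applies verbatim to $T_n - \lambda I$, which is a Toeplitz band matrix of the same shape with $a_0$ replaced by $a_0 - \lambda$, so that \thmref{thm main3} gives the result with $C_\lambda$ in place of $C$. Your additional care about the substitution $a_0 \mapsto a_0 - \lambda$ commuting with the reductions (by working over $K(\lambda)$) is a reasonable elaboration of the same idea, not a different route.
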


\section{Computing $C^n$}\label{sec comp Cn}


In this section, we briefly describe the several ways to compute $C^{n}$ from $C$.

We set $C^{0}=I$, where $I$ is the identity matrix of size $k \times k$. Then we use $C^{n}= C^{n/2} C^{n/2}$ if $n$ is even, and we use $C^{n}= C C^{n-1}$ if $n$ is odd. In this way, $C^{n}$ can be computed in $O(\log n)$ times the number of operations to compute the product of two $k \times k$ matrices. If $k$ is large, one should consider other methods of computing $C^{n}$.

As shown in the previous section, $C^n$ can be expressed in terms of $k$ homogeneous linear recurrences $u_{i,j}$, where $i \in \{1, \, 2, \, \cdots, \, k \}$. If we compute the roots of the characteristic polynomial of $C$, $ch_C(x)$, we can express each of these recurrences in terms of these roots since they satisfy the recurrence relation given by $ch_C(x)$ and their initial values are known (see \cite[section 7.2.9]{SB} for solutions of such recurrences). We should note that the computation of roots of a polynomial of degree higher than five can be a difficult task.

Another method is to use Jordan canonical form of $C$. We explain the advantageous steps in this approach.
First, we note that the characteristic polynomial of $C$, $ch_C(x)$, is the minimal polynomial of $C$ (see \cite[page 325]{SB}). Over $\CC$, $ch_C(x)$ factors into linear terms. Then $C$ have a Jordan canonical form $J$ such that
$C=V^{-1} J V$,
for the Vandermonde matrix $V$ as shown in \eqnref{eqn Jordan} (if any of the eigenvalues has multiplicity more than one, then $V$ will be the corresponding confluent Vandermonde matrix). Then $C^{n}=V^{-1} J^{n} V$.
%
%
This along with \thmref{thm main3} implies that $\det(T_{n}) = (-1)^{ns} a_{s}^{n} \cdot \det(M)$, where $M$ is the upper left $s \times s$ submatrix of $V^{-1} J^{n} V$.
Moreover, we have the following useful information about the part $J^{n}$:

If $J=\text{diag}(J_{1}, J_2, \dots, J_s )$ with $s \leq k$ and $J_i$ is Jordan block matrix for $1 \leq i \leq k$, then
$J^{n}=\text{diag}(J_{1}^n, J_2^n, \dots, J_s^n )$ for every integer $n \geq 1$.

Let $J_{(i)}(\lambda)$ be a Jordan block matrix, part of a Jordan matrix $J$, corresponding to an eigenvalue $\lambda$. As shown in \eqnref{eqn Jordan}, $J_{(i)}(\lambda)$ is an upper triangular square matrix of size $i \times i$ and having $\lambda$'s on its diagonal, $1$'s on the next diagonal and $0$'s elsewhere.
\begin{eqnarray}\label{eqn Jordan}
\qquad J_{(i)}(\lambda)=
\left(
\begin{array}{ccccc}
 \lambda & 1 & &  &  \\
  & \lambda & 1 & &  \\
  &  & \lambda &  \ddots & \\
  &  & & \ddots & 1 \\
  & &  &  & \lambda
\end{array}
\right)_{i \times i}, \, \, \,
V=
\left(
\begin{array}{ccccc}
 \lambda_1^{k-1} & \lambda_1^{k-2} & \cdots & \lambda_1 & 1 \\
 \lambda_2^{k-1} & \lambda_2^{k-2} & \cdots & \lambda_2 & 1 \\
  \vdots & \vdots  & \cdots &  \vdots & \vdots \\
 \lambda_k^{k-1} & \lambda_k^{k-2} & \cdots & \lambda_k & 1
\end{array}
\right)_{k \times k}.
\end{eqnarray}
Then one can show that \cite[Exercise 7.3.27]{W} its $n$-th power is as follows:
\begin{eqnarray*}\label{eqn Jordan power}
\quad J_{(i)}^n(\lambda)=
\left(
\begin{array}{ccccc}
 \lambda^n & n \lambda^{n-1}  & \frac{n(n-1)}{2} \lambda^{n-2} & \cdots &  \\
  & \lambda^n & n \lambda^{n-1} & \ddots &  \\
  &  & \lambda^n &  \ddots & \frac{n(n-1)}{2} \lambda^{n-2} \\
  &  & & \ddots & n \lambda^{n-1} \\
  & &  &  & \lambda^n
\end{array}
\right),
\end{eqnarray*}
where $J_{(i)}^n(\lambda)$ has $n+1$ nonzero diagonals (or less if the dimension of $J_{(i)}(\lambda)$ is less),
and the $j$-th diagonal from the main diagonal has entries $\binom{n}{j}\lambda^{n-j}$.

Note that various methods of matrix exponentiation are explained in the article  \cite{ML}. One can use the modified versions of those methods to compute $C^n$. The reader can find more information on the critique of those methods in the article \cite{ML} and the references therein.

One can use combinatorial arguments to derive combinatorial formulas for the entries of any power of $C$ (see \cite{CL} and \cite{LD}). Although such combinatorial formulas are very interesting, they don't lead to fast algorithms to compute powers of $C$.

Assuming that the roots of $ch_C(x)$ are known, one can compute $C^n$ in $O(k \log_2 n)$ operations by using the method proposed by W. Trench \cite{T}. D. Bini and V. Pan \cite[Page 436]{BP} improved Trench's result. Namely, without assuming that the roots of $ch_C(x)$ are known, one can compute $C^n$ in $O(\frac{3}{2}k^2 \log_2 \frac{n}{k})$ operations. If the field that $C$ defined over supports Fast Fourier Transform, Bini and Pan showed that $C^n$ can be computed in $O(k \log k \log_2 \frac{n}{k})$ operations (see \cite[Page 436]{BP} and the references therein).

\section{Determinants of tridiagonal Toeplitz matrices}

In this section, we consider the tridiagonal matrices, i.e., we have $s=1$, $r=1$, and so $k=2$,
\begin{eqnarray*}\label{eqn mat tridiag Tnk2}
T_{n}=
\left(
\begin{array}{cccc}
 a &        b        &        &\mathbf{0} \\
 c &        a        &  \ddots    &   \\
   & \ddots & \ddots & b \\
\mathbf{0}& &    c   &   a
\end{array}
\right)_{n \times n}
\text{  and  } \, \, \,
C=\left(
\begin{array}{cc}
 -\frac{a}{b} & 1  \\
 -\frac{c}{b} & 0
\end{array}
\right),
\end{eqnarray*}
where $b \neq 0, \, c \neq 0$. In this case, the characteristic polynomial of $C$ is
$\det(x I-C)= x^2+\frac{a}{b}x+\frac{c}{b}$.

We note that
$
C^n=\left(
\begin{array}{cc}
u_{n+1} & u_{n}  \\
w_{n+1} & w_{n}
\end{array}
\right),
$
where both $u_n$ and $w_n$ satisfy the recursive relation $x_{n+2}=\frac{-a}{b} x_{n+1}- \frac{c}{b}x_n$ for each $n \geq 0$, and they have the initial values
$u_0=0$, $u_1=1$, $w_0=1$, $w_1=0$. We recognize that $u_n$ is nothing but the Lucas sequence of the first type. That is, $u_n=U_{n}(P,Q)$ for each integer $n \geq 0$, where $P=\frac{-a}{b}$ and $Q=\frac{c}{b}$. Because $U_{n+2}(P,Q)=P U_{n+1}- Q U_n$ for each integer $n \geq 0$, $U_0(P,Q)=0$ and $U_1(P,Q)=1$.

Applying \thmref{thm main3} gives
$$\det(T_{n})=  (-1)^{n} b^{n} U_{n+1} (\frac{-a}{b},\frac{c}{b}).$$
Hence, existing formulas for $U_n$ applies to  $\det(T_{n})$. For example, using the binet formula for $U_{n+1}$ we obtain
%
%
%
%

$$\det(T_{n})= \frac{1}{\sqrt{a^2-4bc}} \Big[ \big( \frac{a+\sqrt{a^2-4bc}}{2} \big)^{n+1}-\big( \frac{a-\sqrt{a^2-4bc}}{2} \big)^{n+1} \Big],$$
and using the recursive formula for $U_{n+1}$ we obtain
$$\det(T_{n+2})=a \cdot \det(T_{n+1})-bc \cdot \det(T_{n}), \text{   for $n \geq 2$},$$
where $\det(T_{1})= a$ and $\det(T_{2})=a^2 - b c$.

\section{Determinants of pentadiagonal Toeplitz matrices}\label{sec pentadiagonals}

In this section, we give explicit formulas for determinants of pentadiagonal matrices.

In this case, we have $s=2$, $r=2$, and so $k=4$,
\begin{eqnarray*}\label{eqn mat tridiag Tn4}
T_{n}=
\left(
\begin{array}{ccccc}
 a &        b        &   c     & &\mathbf{0} \\
 d &        a        &  \ddots    & \ddots    \\
 e & \ddots & \ddots &  \ddots & c \\
   & \ddots & \ddots &  a & b \\
\mathbf{0}& &    e   &  d & a
\end{array}
\right)_{n \times n}
\text{  and  } \, \, \,
C=\left(
\begin{array}{cccc}
 -\frac{b}{c} & 1 & 0 & 0 \\
 -\frac{a}{c} & 0 & 1 & 0 \\
 -\frac{d}{c} & 0 & 0 & 1 \\
 -\frac{e}{c} & 0 & 0 & 0
\end{array}
\right),
\end{eqnarray*}
where $ c \neq 0, \, e \neq 0$. The characteristic polynomial of $C$ is
$ch_C(x)=\det(x I-C)= x^4+\frac{b}{c}x^3+\frac{a}{c}x^2+\frac{d}{c}x+\frac{e}{c}$.

For each integer $n \geq 1$, $C^n$ is expressed in terms of
$u_n$, $v_n$, $t_n$ and $w_n$, each of which satisfies the recursive relation
$x_{n+4}=-\frac{b}{c}x_{n+3}-\frac{a}{c}x_{n+2}-\frac{d}{c}x_{n+1}-\frac{e}{c}x_{n}$, and that their initial values are given by the second equality below:
\begin{eqnarray*}\label{eqn mat tridiag Cn}
C^n=\left(
\begin{array}{cccc}
 u_{n+3} & u_{n+2} & u_{n+1} & u_{n} \\
 v_{n+3} & v_{n+2} & v_{n+1} & v_{n} \\
 t_{n+3} & t_{n+2} & t_{n+1} & t_{n} \\
 w_{n+3} & w_{n+2} & w_{n+1} & w_{n}
\end{array}
\right),
\text{and} \, \, \,
\left(
\begin{array}{cccc}
 u_{3} & u_{2} & u_{1} & u_{0} \\
 v_{3} & v_{2} & v_{1} & v_{0} \\
 t_{3} & t_{2} & t_{1} & t_{0} \\
 w_{3} & w_{2} & w_{1} & w_{0}
\end{array}
\right)
=
\left(
\begin{array}{cccc}
1 & 0 & 0 & 0 \\
 0 & 1 & 0 & 0 \\
0 & 0 & 1 & 0 \\
0 & 0 & 0 & 1
\end{array}
\right).
\end{eqnarray*}
Applying \thmref{thm main3} gives $\det(T_{n}) = c^{n} \cdot (u_{n+3} v_{n+2}-u_{n+2} v_{n+3})$.

As done in the previous section, one can use the formulas for the homogeneous linear recurrence relation $u_n$ and $v_n$ to derive a formula of $\det(T_{n})$ in terms of the roots of the characteristic polynomial of $u_n$ and $v_n$, which is nothing but $ch_C(x)$. Instead, we used the Jordan canonical forms in our computations below:

Note that $C$ is an irreducible Hessenberg matrix, and that both the minimal and the characteristic polynomials of such matrices are the same (see \cite[page 325]{SB}).
Alternatively, the reader may use a computer algebra system such as Mathematica \cite{Matem}) to check that the minimal polynomial of $C$ can not be of degree $1$, $2$ or $3$.
This makes some restrictions on the types of Jordan canonical forms, since the multiplicity of an eigenvalue $\lambda$ in the minimal polynomial is the size of the largest Jordan block corresponding to $\lambda$.
We have the following five cases.
We used Mathematica \cite{Matem} for the details of these computations.

\textbf{Case I:} $ch_C(x)=(x-\lambda_1)(x-\lambda_2)(x-\lambda_3)(x-\lambda_4)$, where the eigenvalues $\lambda_i$'s for $i=1, \cdots 4$ are distinct. Then, $J=\text{diag}(J_{(1)}(\lambda_1), J_{(1)}(\lambda_2), J_{(1)}(\lambda_3), J_{(1)}(\lambda_4))$, and we have
$\displaystyle{\det(T_{n})=c^n\frac{E}{D}}$, where
\begin{equation}\label{eqn det case I}
\begin{split}
E&=\left(\lambda_2-\lambda_3\right) \left(\lambda_1-\lambda_4\right) \left(\left(\lambda_2 \lambda_3\right)^{n+2}+\left(\lambda_1 \lambda_4\right)^{n+2}\right)-\left(\lambda_1-\lambda_3\right) \left(\lambda_2-\lambda_4\right) \cdot \\
& \quad \big(\left(\lambda_1 \lambda_3\right)^{n+2} +\left(\lambda_2 \lambda_4\right)^{n+2}\big)+\left(\lambda_1-\lambda_2\right) \left(\lambda_3-\lambda_4\right) \left(\left(\lambda_1 \lambda_2\right)^{n+2}+\left(\lambda_3 \lambda_4\right)^{n+2}\right),\\
D&=\left(\lambda_1-\lambda_2\right) \left(\lambda_1-\lambda_3\right) \left(\lambda_1-\lambda_4\right) \left(\lambda_2-\lambda_3\right) \left(\lambda_2-\lambda_4\right) \left(\lambda_3-\lambda_4\right).
\end{split}
\end{equation}

\textbf{Case II:} $ch_C(x)=(x-\lambda_1)^2(x-\lambda_2)(x-\lambda_3)$, where the eigenvalues $\lambda_1$, $\lambda_2$ and $\lambda_3$ are distinct. Then, $J=\text{diag}(J_{(2)}(\lambda_1), J_{(1)}(\lambda_2), J_{(1)}(\lambda_3))$, and we have
$\displaystyle{\det(T_{n})=c^n\frac{E}{D}}$, where
\begin{equation}\label{eqn det case II}
\begin{split}
E&=\lambda_1^{1 + n} (\lambda_1^{3 + n} (\lambda_2 - \lambda_3) +
    \lambda_2^{2 + n} (\lambda_1 (-(2 + n) \lambda_1 + \lambda_2 +
          n \lambda_2) + ((3 + n)\lambda_1 \\
          & \quad- (2 + n) \lambda_2) \lambda_3))+
 \lambda_3^{2 + n} (\lambda_2^{2 + n} (\lambda_2 - \lambda_3) +
    \lambda_1^{1 + n} ((2 + n) \lambda_1^2
    \\ & \quad + (2 + n) \lambda_2 \lambda_3
     -\lambda_1 ((3 + n) \lambda_2 + \lambda_3 + n \lambda_3))),\\
D&=\left(\lambda_1-\lambda_2\right)^2 \left(\lambda_1-\lambda_3\right)^2 \left(\lambda_2-\lambda_3\right).
\end{split}
\end{equation}
Note that formula of $\frac{E}{D}$ using \eqnref{eqn det case II} is the limiting value of $\frac{E}{D}$ using \eqnref{eqn det case I} as $\lambda_4 \longrightarrow \lambda_1$.

\textbf{Case III:} $ch_C(x)=(x-\lambda_1)^2(x-\lambda_2)^2$, where the eigenvalues $\lambda_1$ and $\lambda_2$  are distinct.
Then, $J=\text{diag}(J_{(2)}(\lambda_1), J_{(2)}(\lambda_2))$, and we have
$\displaystyle{\det(T_{n})=c^n\frac{E}{D}}$, where
%
\begin{equation}\label{eqn det case III}
\begin{split}
E&=\lambda_1^{4 + 2 n} - (2 + n)^2 (\lambda_1 \lambda_2)^{1 + n}(\lambda_1^2+ \lambda_2^2) +
 2 (3 + 4 n + n^2) (\lambda_1 \lambda_2)^{2 + n} + \lambda_2^{4 + 2 n},\\
D&=\left(\lambda_1-\lambda_2\right)^4.
\end{split}
\end{equation}
In this case, the formula of $\frac{E}{D}$ using \eqnref{eqn det case III} is the limiting value of $\frac{E}{D}$ using \eqnref{eqn det case II} as $\lambda_3 \longrightarrow \lambda_2$.

%

\textbf{Case IV:} $ch_C(x)=(x-\lambda_1)^3(x-\lambda_2)$, where the eigenvalues $\lambda_1$ and $\lambda_2$  are distinct.
Then, $J=\text{diag}(J_{(3)}(\lambda_1), J_{(1)}(\lambda_2))$, and we have
$\displaystyle{\det(T_{n})=c^n\frac{E}{D}}$, where
%
\begin{equation}\label{eqn det case IV}
\begin{split}
E&=(2 + n) \lambda_1^n \big((1 + n) (\lambda_1^{3 + n}-\lambda_2^{3 + n}) - (3 + n) \lambda_1^{2 + n}
     \lambda_2 + (3 + n) \lambda_1 \lambda_2^{2 + n}\big),\\
D&=2 \left(\lambda_1-\lambda_2\right)^3.
\end{split}
\end{equation}
Note that the formula of $\frac{E}{D}$ depending on \eqnref{eqn det case IV} is the limiting value of $\frac{E}{D}$ relying on \eqnref{eqn det case II} as $\lambda_3 \longrightarrow \lambda_1$.

\textbf{Case V:} $ch_C(x)=(x-\lambda_1)^4$. Then, $J=J_{(4)}(\lambda_1)$, and we have
$$\det(T_{n})=\frac{c^n}{12} (n+3) (n+2)^2 (n+1)\lambda_1^{2n}.$$
Note that this formula of $\det(T_{n})$ is the limiting value of $\det(T_{n})$ given in case IV as $\lambda_2 \longrightarrow \lambda_1$.

Next, we provide examples for each of these five cases.

\textbf{Example I:}
If $a=101$, $b=-17$, $c=1$, $d=-247$, $e=210$, then
we have
$ch_C(x)=(x-2)(x-3)(x-5)(x-7)$,
and
for any integer $n \geq 4$,
$$\det(T_{n})=\frac{1}{120} (-6 \cdot 10^{n+2}+5 \cdot 15^{n+2}+6^{n+2}-6 \cdot 21^{n+2}+5 \cdot 14^{n+2}+35^{n+2}).$$

\textbf{Example II:}
If $a=17$, $b=-7$, $c=1$, $d=-17$, $e=6$, then
we have
$ch_C(x)=(x-1)^2(x-2)(x-3)$,
and
for any integer $n \geq 4$,
$$\det(T_{n})=\frac{1}{4} ( 2^{n+2} (2n+3)-3^{n+2}(2n+5)+6^{n+2}+1 ).$$

\textbf{Example III:}
If $a=37$, $b=-10$, $c=1$, $d=-60$, $e=36$, then
we have
$ch_C(x)=(x-2)^2(x-3)^2$,
and
for any integer $n \geq 4$,
$$\det(T_{n})=4^{n+2} + 9^{n+2} - (n(n+4)+16) 6^{n+1}.$$

\textbf{Example IV:}
If $a=30$, $b=-9$, $c=1$, $d=-44$, $e=24$, then
we have
$ch_C(x)=(x-2)^3(x-3)$,
and
for any integer $n \geq 4$,
$$\det(T_{n})=2^{n-1} (n+2) (3^{n+2} (n-3) + 2^{n+2} (n+7)).$$

\textbf{Example V:}
If $a=24$, $b=-8$, $c=1$, $d=-32$, $e=16$, then we have $ch_C(x)=(x-2)^4$,
and
for any integer $n \geq 4$,
$$\det(T_{n})=\frac{4^{n-1}}{3} (n+3) (n+2)^2 (n+1).$$


It is straightforward to implement the algorithm we outlined to obtain similar closed form formulas for other small values of $r$, $s$ (and so $k$) by using Mathematica \cite{Matem} or some other computer programs having symbolic capabilities. However, the formulas for $k \geq 6$ are not short enough to include here.


\end{document}